\documentclass[12pt]{amsart}
\usepackage{amsmath,amssymb}
\usepackage{amsfonts}
\usepackage{amsthm}
\usepackage{latexsym}
\usepackage{graphicx}

\def\lf{\left}
\def\ri{\right}

\def\p{\partial}

\def\R{\mathbb{R}}
\def\C{\mathbb{C}}

\def\vv<#1>{\langle#1\rangle}

\def\XXint#1#2{\setbox0=\hbox{$#1{#2}{\int}$}{#2}\kern-.5\wd0 }

\def\XXint#1#2#3{{\setbox0=\hbox{$#1{#2#3}{\int}$}
     \vcenter{\hbox{$#2#3$}}\kern-.5\wd0}}



\def\vv<#1>{\langle#1\rangle}
\newtheorem{thm}{Theorem}[section]

\newtheorem{lem}{Lemma}[section]

\theoremstyle{definition}

\theoremstyle{remark}

\newtheorem{rem}{Remark}[section]

\numberwithin{equation}{section}

\begin{document}

\title{Complex product manifolds and bounds of curvature}

\keywords{complex products, K\"ahler manifolds, bisectional curvature, negative curvature}

\begin{abstract} Let $M=X\times Y$ be the product of two complex manifolds of positive dimensions.
 In this paper, we prove that there is no complete K\"ahler metric $g$ on $M$ such that: either (i) the holomorphic bisectional curvature of $g$ is bounded by a negative constant and the Ricci curvature is bounded below by $-C(1+r^2)$ where $r$ is the distance from a fixed point; or (ii) $g$ has nonpositive sectional curvature and the holomorphic bisectional curvature is bounded above by $-B(1+r^2)^{-\delta}$ and the Ricci curvature is bounded below by $-A(1+r^2)^\gamma$ where $A, B, \gamma, \delta$ are positive constants with $\gamma+2\delta<1$. These are generalizations of some previous results, in particular the result of Seshadri and Zheng \cite{SeshadriZheng}.
\end{abstract}

\renewcommand{\subjclassname}{\textup{2000} Mathematics Subject Classification}
 \subjclass[2000]{Primary 53B25; Secondary 53C40}
\author{Luen-Fai Tam$^1$ and Chengjie Yu}

\address{The Institute of Mathematical Sciences and Department of
 Mathematics, The Chinese University of Hong Kong,
Shatin, Hong Kong, China.} \email{lftam@math.cuhk.edu.hk}

\address{Department of Mathematics, Shantou University, Shantou, Guangdong, China } \email{cjyu@stu.edu.cn}

\thanks{$^1$Research partially supported by Hong Kong RGC General Research Fund  \#GRF 2160357}
\date{September 2009}

\maketitle
\markboth{Luen-Fai Tam and Chengjie Yu}
 {Complex product manifolds and bounds of curvature}
\section{Introduction}

In \cite{SeshadriZheng}, Seshadri and Zheng proved the following result:

\begin{thm}\label{SeshadriZheng} Let $M=X\times Y$ be the product of two complex manifolds of positive dimensions. Then $M$ does not admit any complete Hermitian metric with bounded torsion and holomorphic bisectional curvature bounded between two negative constants.
\end{thm}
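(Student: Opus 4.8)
The plan is to exploit the holomorphic splitting of the tangent bundle coming from the product structure, and to convert the negative upper bound on the bisectional curvature into strict plurisubharmonicity of the length function of a fixed coordinate direction. Write $TM=H\oplus V$, where $H$ and $V$ are the holomorphic subbundles tangent to the factors $X\times\{y\}$ and $\{x\}\times Y$; both are integrable, their leaves are complex submanifolds, and in local product coordinates $(z,w)$ the fields $\partial/\partial z^i$ are local holomorphic sections of $H$ while $\partial/\partial w^\alpha$ are local holomorphic sections of $V$. The analytic input I would use is the Bochner-type inequality for a local nonvanishing holomorphic vector field $\xi$ (coming from the Chern connection: $\dbar\xi=0$ kills the bad term and Cauchy--Schwarz handles the $(1,0)$ part),
\[
\iddbar\log|\xi|^2_g\ \ge\ -\frac{\ii\,\la R(\cdot,\cdot)\xi,\xi\ra}{|\xi|^2},
\]
with an additional torsion term in the Hermitian case. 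Evaluating the right-hand side on a direction $\zeta$ gives $-R(\xi,\bar\xi,\zeta,\bar\zeta)/|\xi|^2\ge a|\zeta|^2$ once the holomorphic bisectional curvature is $\le -a<0$, so on the set $\{\xi\ne0\}$ one obtains $\iddbar\log|\xi|^2\ge a\,\omega$. In the Hermitian setting the torsion correction is bounded by hypothesis and can be absorbed after shrinking $a$, which is precisely where the assumption of bounded torsion enters.

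Next I would fix a leaf $L=\{x_0\}\times Y$ and take the horizontal field $\xi=\partial/\partial z^1$ restricted to $L$. Since $L$ is a complex submanifold, $\iddbar$ of the restriction equals the restriction of $\iddbar$, so $\varphi:=\log|\xi|^2_g\big|_L$ is strictly plurisubharmonic with $\iddbar_L\varphi\ge a\,\omega_L$, i.e.\ $\Delta_L\varphi\ge(\dim Y)\,a>0$. The contradiction I am aiming for is that a complete manifold whose (Chern--)Ricci curvature is bounded below cannot carry a function that is bounded above and satisfies $\Delta\varphi\ge c>0$: by the Omori--Yau maximum principle there is a sequence along which $\varphi\to\sup\varphi$ and $\Delta\varphi\le 1/k$, contradicting $\Delta\varphi\ge c$. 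Here the two-sided bisectional bound, together with bounded torsion, yields a two-sided bound on the Ricci curvature, so the hypotheses of Omori--Yau are met. The entire proof then reduces to manufacturing out of $\varphi$ a function that is genuinely bounded above and lives on a complete manifold.

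This reduction is the main obstacle, and it is where the product structure must be used quantitatively: neither is $\log|\xi|^2$ a priori bounded, nor is a single leaf complete in the induced metric. To handle boundedness I would pass to a frame-independent, globally defined combination, the natural candidate being the defect from orthogonality
\[
u=\log\frac{\det g}{\det(g|_H)\,\det(g|_V)}\ \le\ 0,
\]
which is bounded above by $0$ and vanishes exactly where $H\perp V$; computing $\iddbar u$ through the Griffiths subbundle-curvature formula reintroduces the bisectional curvature together with the squared second fundamental forms of the two foliations. The completeness difficulty I would sidestep by working on all of $M$, which is complete, rather than on an individual leaf, using the two-sided curvature bound and bounded torsion to secure the bounded-geometry input needed for the maximum principle.

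The hardest part, which I expect to be the technical heart, is carrying out the curvature computation for $u$ (or for a suitable variant built from $\varphi$) with the correct sign and showing that, after the Griffiths second-fundamental-form terms and the Hermitian torsion corrections are accounted for, the residual Laplacian remains bounded below by a positive constant. A naive trace of $\iddbar u$ mixes partial Ricci traces over the non-orthogonal distributions $H$ and $V$ and is sign-indefinite, so the delicate point is to organize these terms so that the uniformly negative mixed bisectional curvature survives as a definite positive lower bound; once that estimate is in place, the Omori--Yau contradiction closes the argument, and bounded torsion is exactly what keeps the Hermitian error terms uniformly controlled throughout.
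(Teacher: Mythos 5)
Your first half runs parallel to the method this paper actually uses (the paper only quotes Theorem \ref{SeshadriZheng} from \cite{SeshadriZheng}; its own proofs are of the K\"ahler generalizations, Theorems \ref{th-1} and \ref{th-2}): restricting a coordinate field of one factor to a leaf of the other and using negative bisectional curvature to get $\Delta_L\|\xi\|^2\ge c\,\|\xi\|^2$ is exactly the computation \eqref{eqn-f}. But your diagnosis of the two obstacles is off, and your fix for them has a genuine gap. Completeness of the leaf is not the problem: $X\times\{q\}$ is a closed subset of the complete manifold $M$, hence complete in the induced metric. What fails is the \emph{Ricci lower bound} for the induced leaf metric (the leaf is not totally geodesic, and its second fundamental form is uncontrolled), which is what the classical Omori--Yau principle needs. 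The paper circumvents this with Takegoshi's result (Theorem \ref{Takegoshi}), which trades the Ricci hypothesis for a volume-growth bound, supplied in turn by the localized Schwarz lemma (Lemma \ref{lem-schwartz}) applied to the projections together with the co-area formula (Lemma \ref{volume-l1}). Your proposal contains no substitute for this step: in particular you never establish boundedness of $\|\xi\|^2$ on the leaf, and the Schwarz-lemma bound --- which is precisely where the curvature \emph{lower} bound (and, in the Hermitian case, the bounded torsion) enters quantitatively --- is absent from your argument.

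The replacement you offer, running Omori--Yau on $M$ with the orthogonality defect $u=\log\bigl(\det g/(\det (g|_H)\det (g|_V))\bigr)$, fails on exactly the sign issue you flag. By the curvature-decreasing property of holomorphic subbundles, $\iddbar u$ equals $c_1(H,g|_H)+c_1(V,g|_V)-c_1(T^{1,0}M,g)$ up to normalization, and at any point where $H\perp V$ the full trace cancels the two partial traces, leaving $\iddbar u(\zeta,\bar\zeta)=-\|A^H_\zeta\|^2-\|A^V_\zeta\|^2\le 0$: the bisectional curvature drops out of this combination identically at orthogonal configurations, which are precisely where $u$ attains its largest value ($u=0$ iff $H\perp V$). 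So no inequality of the form $\Delta u\ge c>0$ can hold, and the uniformly negative mixed curvature can only re-enter through non-orthogonality cross terms that vanish at the points a maximum-principle argument must examine. The ``technical heart'' you defer is therefore not a bookkeeping exercise but the missing idea itself; the resolution in the literature (and in this paper's Theorem \ref{th-1}) is the opposite of yours: keep the a priori unbounded function $f=\|\xi\|^2$, \emph{prove} it is bounded via a Schwarz lemma using the curvature lower bound, obtain $\Delta f\ge c f^2$, and close with a maximum principle (Takegoshi's) whose hypotheses need only a volume estimate rather than bounded geometry.
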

In particular, there is no complete K\"ahler metric on $M$ with holomorphic bisectional curvature bounded between two negative constants. For earlier results in this direction see \cite{Yang,Zheng93,Zheng94,Seshadri2006}. The result of  Seshadri-Zheng has been generalized by Tosatti \cite{Tosatti} to almost-Hermitian manifolds.

On the other hand, there is an open question   whether the assumption on the lower bound of the curvature can be removed. In fact, it is an open question raised by N. Mok (see \cite{SeshadriZheng}) whether the bidisc admit a complete K\"ahler metric with bisectional curvature bounded above by -1.

In this work, by using a local version of the generalized Schwartz lemma of Yau \cite{Yau-Schwartz} and a Omori-Yau type maximum principle of Takegoshi \cite{Takegoshi}, we prove the following:

\begin{thm}\label{th-1}
Let $M=X^m\times Y^n$ be the product of two complex manifolds with
positive dimension. Then, there is no complete K\"ahler metric on
$M$ with Ricci curvature $\geq -A(1+r)^2$ and holomorphic
bisectional curvature $\leq -B$, where $A$ is some nonnegative
constants, $B$ is some positive constant, and $r(x,y)=d(o,(x,y))$ is
the distance of $(x,y)$ and a fixed point $o\in M$.
\end{thm}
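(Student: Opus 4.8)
The plan is to argue by contradiction: suppose a complete K\"ahler metric $g$ with the stated bounds exists on $M=X\times Y$, and extract a contradiction from the negative upper bound $-B$ on the holomorphic bisectional curvature together with the product structure. The first step is to record what the product structure gives us. The holomorphic tangent bundle splits holomorphically as $T^{1,0}M=T_X\oplus T_Y$, where $T_X=\pi_X^*T^{1,0}X$ and $T_Y=\pi_Y^*T^{1,0}Y$ are the distributions tangent to the two factors, and each slice $X\times\{y_0\}$ and $\{x_0\}\times Y$ is a complex submanifold. Since a complex submanifold of a K\"ahler manifold has holomorphic bisectional curvature no larger than the ambient one (the second fundamental form enters with a favourable sign in the Gauss equation), the metric $h$ induced on a fixed slice $\{x_0\}\times Y\cong Y$ is K\"ahler with holomorphic bisectional curvature $\le -B$.

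Next I would set up the analytic engine, namely the Chern--Lu/Bochner inequality underlying Yau's generalized Schwarz lemma \cite{Yau-Schwartz}. Applying it to the holomorphic projection $f=\pi_Y\colon(M,g)\to(Y,h)$ and the function $u:=|\partial f|^2_g=\tr_g f^*h$, the lower Ricci bound of $g$ and the negative bisectional curvature of the target combine to give a differential inequality of the shape
\[
\Delta_g\log u\ \ge\ -A(1+r)^2+cB\,u
\]
on the set $\{u>0\}$, where $c>0$ depends only on $m,n$; the crucial positive term $cB\,u$ is precisely the contribution of the bound $-B$. The same computation, expressed bundle-theoretically, says that $\det T_Y$ (and symmetrically $\det T_X$) is a negative line bundle, $\Theta_{\det T_Y}\le -nB\,\omega_g$, i.e.\ $\log\det(g|_{T_Y})$ is strictly plurisubharmonic in every direction. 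At this stage I would also record the lower bound coming from the product structure: restricted to the slice $\{x_0\}\times Y$ the map $f$ is the identity, so $f^*h=h$ and $u$ is bounded below by a positive constant there; as $x_0$ is arbitrary, $u$ is bounded away from $0$ on all of $M$.

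The contradiction is then to be extracted from the maximum principle. Here the upper bound on bisectional curvature wants to force $u$ down while the product structure keeps it bounded below; if the Ricci lower bound were a constant this would already follow from Yau's maximum principle. Because the Ricci lower bound is only $-A(1+r)^2$, I would instead invoke the Omori--Yau type maximum principle of Takegoshi \cite{Takegoshi}, which is tailored to exactly this quadratic growth, applied to $u$ after a suitable normalization (such as $u/(1+r^2)$, or $\log u$ tested against a barrier built from $r$). Feeding the near-extremal sequence produced by Takegoshi's principle into the inequality above, the surviving sign-definite term $cB\,u$ must be reconciled with the positive lower bound for $u$ and with the controlled growth of the Ricci term, and this is where the incompatibility arises.

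I expect the main obstacle to be precisely the quadratic growth of the Ricci lower bound. The generalized Schwarz lemma by itself yields only a bound $u\le C(1+r)^2$, which grows and does not contradict the positive lower bound for $u$; taming this growth is the entire difficulty, and it is the reason a \emph{local} version of the Schwarz lemma and Takegoshi's principle (rather than the classical ones) are needed. Concretely, the delicate point will be to construct the correct test function adapted to the distance $r$ so that the quadratically growing Ricci term is absorbed while the negative-curvature term $cB\,u$ is retained, and to verify that Takegoshi's hypotheses are met for the a priori only quadratically bounded function $u$. Once the barrier is chosen correctly, the negativity $-B$ of the bisectional curvature together with the completeness of $g$ should close the argument.
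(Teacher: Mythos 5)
Your outline assembles the right ingredients (the Chern--Lu inequality behind Yau's Schwarz lemma, Takegoshi's theorem) but it stops exactly where the proof has to happen, and the step you defer --- ``construct the correct test function adapted to $r$ so that the quadratically growing Ricci term is absorbed'' --- cannot be carried out in the setting you chose. Working on all of $M$ with $u=\|\partial \pi_Y\|^2$, the only available bounds are $u\le C(1+r)^2$ (from the Schwarz lemma) and $\Delta u\ge -2A(1+r)^2u+2Bu^2$; along any Omori--Yau-type sequence the good term $Bu^2$ is controlled by $BC(1+r)^2u$, which is of exactly the same order as the bad Ricci term $-2A(1+r)^2u$, so no choice of barrier extracts a sign contradiction on $M$. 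Moreover, your claim that $u$ is bounded away from $0$ on all of $M$ is incorrect as stated: $u$ depends on the choice of the reference slice $\{x_0\}\times Y$ that furnishes the target metric $h$, and the lower bound $u\ge n$ holds only on that one slice; varying $x_0$ changes the function, so you would be juggling a family of functions rather than applying a maximum principle to one.

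The paper escapes this dilemma by moving the analysis from $M$ to a single slice $X_q=X\times\{q\}$ with its induced metric, and by replacing the energy density of a map by the function $f(x)=\|(\pi''_{x})_*(u)\|^2=g_{u\bar u}(x,q)$, the squared $g$-norm of one fixed vector $u\in T_q^{1,0}Y$ --- i.e.\ the norm of a global holomorphic section of the trivial bundle $T_q^{1,0}Y\times X_q$, which the curvature hypothesis makes a negative bundle. The decisive point, which your proposal misses, is that the intrinsic Laplacian of $f$ on $X_q$ contains \emph{no Ricci term at all}: the Bochner-type computation \eqref{eqn-f} gives $\Delta_{X_q}f\ge 2mBf$ using only the bisectional upper bound $-B$. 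The Ricci lower bound $-A(1+r)^2$ is demoted to two harmless roles: via the local Schwarz lemma (Lemma \ref{lem-schwartz}) it gives the uniform bound $f\le C_1$ (inequality \eqref{fbound-e1}, evaluated at the basepoint where $r=0$), upgrading the inequality to $\Delta_{X_q}f\ge (2mB/C_1)f^2$; and via the co-area formula together with ambient volume comparison it bounds the volume growth of $(X_q,g^q)$ by $C_1\exp(C_2r^2)$ (Lemma \ref{volume-l1}). Takegoshi's theorem is then used not as a pointwise maximum principle producing near-extremal sequences, as you envisage, but as a volume-growth dichotomy: $\Delta f\ge Cf^{1+a}/(1+r)^b$ with $b=0$ on the nonempty set $\{f>\delta\}$ forces $\liminf_{r\to\infty}\log V(r)/r^2=\infty$ on $X_q$, contradicting the $\exp(C_2r^2)$ bound. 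Without this reduction to the slice --- where the quadratic Ricci growth disappears from the differential inequality and reappears only in volume estimates --- your plan cannot close, as you yourself half-acknowledge.
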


On the other hand, Seshadri \cite{Seshadri2006} has constructed a complete K\"ahler metric on $\C^n$ with negative curvature. It seems that
the assumption on the upper bound of the curvature in Theorem \ref{SeshadriZheng} is necessary. However, one can also relax the upper bound of the curvature as follows:

\begin{thm}\label{th-2}
Let $M=X^m\times Y^n$ be the product of two complex manifolds with
positive dimensions. Then, there is no complete K\"ahler metric on
$M$ with Ricci curvature $\geq -A(1+r^2)^\gamma$, holomorphic
bisectional curvature $\leq -B(1+r^2)^{-\delta}$, and nonpositive
sectional curvature, where $\gamma\geq 0$, $\delta>0$ such that
$\gamma+2\delta<1$, $A,B$ are some positive constants, and
$r(x,y)=d(o,(x,y))$ is the distance of $(x,y)$ and a fixed point
$o\in M$.
\end{thm}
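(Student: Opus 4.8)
The plan is to exploit the product structure $M=X^m\times Y^n$ together with a Schwarz-lemma type argument. Let $g$ be a complete Kähler metric on $M$ as in the statement. The key geometric object is the projection $\pi:M\to X$, which is a holomorphic map onto a lower-dimensional complex manifold. The idea is to pull back via $\pi$ a suitable auxiliary object and show that the curvature hypotheses force a contradiction. More precisely, I would fix a point and look in the $Y$-directions: the restriction of $g$ to each fiber $\{x\}\times Y$, or better, the holomorphic tangent vectors tangent to the $Y$ factor, should satisfy a differential inequality coming from the negative upper bound on the holomorphic bisectional curvature. The product structure is crucial because it provides a globally defined, nontrivial holomorphic distribution (the $TX$ and $TY$ directions), and these directions interact through the bisectional curvature.

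The technical engine, as the authors indicate, will be a local version of Yau's generalized Schwarz lemma combined with Takegoshi's Omori--Yau type maximum principle. Concretely, I would consider the function measuring the norm (with respect to $g$) of a holomorphic vector field or form adapted to the splitting, say $u=\log|V|^2_g$ where $V$ is a flat holomorphic vector field in a $Y$-direction viewed on the product. Using the Kähler condition and the Bochner--Kodaira technique, one computes $\Delta u$ and, via the curvature bounds, obtains an inequality of the form $\Delta u\ge c\,e^{u}-(\text{lower-order Ricci terms})$, where the positive term $e^u$ comes precisely from the negative bisectional curvature bound $\leq -B(1+r^2)^{-\delta}$ and the error terms are controlled by the Ricci lower bound $\geq -A(1+r^2)^\gamma$. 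The nonpositive sectional curvature assumption should enter to control the Laplacian of the distance function $r$ (via comparison) and to keep the Hessian terms of the right sign, which is what makes the weaker curvature decay admissible here compared to Theorem~\ref{th-1}.

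The core step is then to apply Takegoshi's maximum principle to a carefully chosen test function of the form $\phi=u-\epsilon\,(1+r^2)^{\alpha}$ (or a cutoff-regularized variant), choosing the exponent $\alpha$ and the weight so that the gradient-and-Hessian growth permitted by the maximum principle is reconciled with the differential inequality. The condition $\gamma+2\delta<1$ is exactly the arithmetic constraint needed so that the competing growth rates balance: the $e^u$ term dominates on the relevant scale, producing a pointwise bound on $u$, and iterating or taking a supremum forces $V$ to vanish or to have bounded norm in a way incompatible with $V$ being a nonzero flat holomorphic field tangent to a positive-dimensional factor. This yields the contradiction.

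The main obstacle I anticipate is making the maximum-principle argument genuinely quantitative under the variable (polynomially decaying) curvature bound rather than a constant one: Yau's Schwarz lemma in its classical form uses a uniform negative upper bound, whereas here the bound $-B(1+r^2)^{-\delta}$ degenerates at infinity, so one cannot directly conclude a uniform estimate. The delicate part is to track how the distance-weighted error terms (controlled by $\gamma$ through Ricci and by the Laplacian comparison for $r$ under nonpositive sectional curvature) compete against the degenerating good term governed by $\delta$, and to verify that $\gamma+2\delta<1$ indeed closes the estimate. Handling the lack of smoothness of $r$ and the cut-off functions rigorously within Takegoshi's framework, while preserving the sign of the crucial terms, is where the real work lies.
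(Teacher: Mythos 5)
Your outline reproduces the paper's high-level strategy (flat holomorphic field in a $Y$-direction over the slice $X_q$, a Schwarz-lemma growth estimate, a maximum-principle contradiction), but it has a genuine gap at exactly the point where the theorem's content lies. The entire argument hinges on a quantitative Schwarz lemma that survives the degenerating upper bound $-B(1+r^2)^{-\delta}$, and your proposal explicitly defers this (``the delicate part is to track how\dots and to verify that $\gamma+2\delta<1$ indeed closes the estimate'') without supplying a mechanism. The paper's mechanism is Lemma \ref{lem-schwartz-2}: for $u=\|\partial \pi''_{x_0}\|^2$ one has the Chern--Lu inequality \eqref{eqn-Q}, and the degeneration is absorbed by passing to the weighted function $w=u\,(C_2+v)^{-\delta}$ with $v=r^2(x_0,y)$, i.e.\ the ambient distance restricted to the fiber. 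The computation closes because $|\nabla v|^2\le 4uv$ (note that $u$ itself reappears in this bound, which is what keeps the extra terms of quadratic type in $w$) and $\Delta v\le u\,(\Delta r^2)(x_0,y)\le C_1u(1+v)^{(1+\gamma)/2}$, and $\gamma+2\delta<1$ enters precisely so the bad term $\delta C_1(C_2+v)^{\frac\gamma2+\delta-\frac12}w^2$ can be beaten by choosing $C_2$ large. This in turn requires $r^2$ to be smooth and convex, which forces the reduction to the universal cover (Cartan--Hadamard) --- a step your proposal omits entirely; without it, $r$ is merely Lipschitz and the convexity you invoke (``keep the Hessian terms of the right sign'') is unavailable.

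Your endgame is also wrong as stated. That $V$ has ``bounded norm'' is not incompatible with $V$ being a nonzero flat holomorphic field: in the proof of Theorem \ref{th-1} the analogous function $f=\|(\pi''_x)_*(u)\|^2$ \emph{is} bounded, and the contradiction comes not from that but from Takegoshi's Theorem \ref{Takegoshi} played against a proven volume upper bound for the slice (Lemma \ref{volume-l1}); your sketch supplies neither the polynomial growth bound $f\le C_1(1+r^2(x,q))^\delta$ (which is exactly what Lemma \ref{lem-schwartz-2} delivers) nor any volume estimate for $X_q$. Moreover, Takegoshi's result is a volume-growth dichotomy for functions satisfying $\Delta f\ge Cf^{1+a}(1+r)^{-b}$, not an Omori--Yau principle to be applied to a test function of the form $u-\epsilon(1+r^2)^\alpha$, so the ``core step'' as you describe it misreads the tool. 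In fact the paper bypasses Takegoshi here: since $f\le C_1(1+r^2)^\delta$ while $\Delta_{X_q}f\ge 2mB(1+r^2)^{-\delta}f$, the function $h=\log f-2\delta\log(C_2+r^2)$ tends to $-\infty$ and attains an interior maximum on $X_q$, where convexity of $r^2$ gives $|\nabla_{X_q}r|\le 1$ and $\Delta_{X_q}r^2\le \Delta r^2\le C_3(1+r^2)^{\frac{1+\gamma}2}$, yielding $0\ge \Delta_{X_q}h\ge 2(C_2+r^2)^{-\delta}\bigl(mB-\delta C_3C_2^{\frac{-1+\gamma+2\delta}{2}}\bigr)>0$ for $C_2$ large --- again possible only because $\gamma+2\delta<1$. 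The Takegoshi route you gesture at is viable (the authors say so), but it, too, needs the missing Schwarz estimate plus a slice-volume bound, neither of which your proposal provides.
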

Our method   relies on a simple observation. Suppose there is a
Hermitian metric $g$ with negative holomorphic bisectional curvature
on $M=X\times Y$. Let $q$ be any fixed point in $Y$. Then the
holomorphic vector bundle $V$ over $X_q:=X\times \{q\}$, with fibre
$V_{(x,q)}=T^{1,0}_qY\subset T_{(x,q)}^{1,0}M=T^{1,0}_xX\oplus
T^{1,0}_qY$, as a subbundle of $T^{1,0}M|_{X_q}$, is negative.
However, $V=T^{(1,0)}_qY\times X_q$ is a trivial vector bundle, and
hence have nonzero global holomorphic sections. Hence the question
reduces to the question on the existence of nontrivial global
holomorphic sections on the negative vector bundle $V$ . When $X$ is
compact, then one can conclude easily that this is impossible (See
Kobayashi-Wu \cite{Wu-Kobayashi}). Hence $X\times Y$ does not have a
K\"ahler metric with negative holomorphic bisectional curvature. The
result was first proved by Zheng \cite{Zheng94} using different
method (When $g$ is K\"ahler, it is first proved by Yang
\cite{Yang-2}). Here the metric is not even assumed to be complete.

When $X$ is noncompact and curvature bounds are relaxed, we can only
have vanishing theorem with some restriction on the growth of the
global section. Controlling the growth of the global section can be
done by a local version of Schwartz lemma. Moreover, we need some
geometric condition on $X_q$, for example the validity of Omori-Yau
type maximum principle on $X_q$. This is guaranteed by a volume
estimate of $X_q$ and a theorem of Takegoshi \cite{Takegoshi}.

The paper is organized as follows: In section 2, we will prove Theorem \ref{th-1} and in section 3 we will prove Theorem \ref{th-2}.

\section{Proof  of Theorem \ref{th-1}}

Before we prove the theorem, we need several lemmas. First, we have the following local version of the  Schwartz lemma by Yau \cite{Yau-Schwartz}. See also \cite[Theorem 2.1]{ChenChengLu79}.
\begin{lem}\label{lem-schwartz}
Let $(M^m,g)$ and $(N^n,h)$ be  two complete K\"ahler manifolds and
let $f$ be  a holomorphic map from $M$ to $N$. Let $o\in M$ and let
$R>0$. Suppose the Ricci curvature of $B_o(2R)$ is bounded from
below by $-K$ and the holomorphic bisectional curvature at every
point in $f(B_o(2R))$ is bounded above by $-B$ where $K$ and $B$ are
positive constant. Then on $B_o(R)$,
\begin{equation}\label{Schwarz-e1}
f^*\omega_h\leq C\cdot\frac{K+R^{-2}\lf(1+
K^\frac12R\coth(K^\frac12R)\ri)}{B}\omega_g
\end{equation}
for some positive constant $C$ depending only on $m$.
\end{lem}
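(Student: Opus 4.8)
The plan is to bound the trace $u=\tr_g f^*\omega_h$ and then invoke that $f^*\omega_h\le u\,\omega_g$ as nonnegative $(1,1)$-forms, so that an upper bound for $u$ on $B_o(R)$ gives the assertion. First I would record the Chern--Lu--Yau inequality for a holomorphic map of K\"ahler manifolds: at every point where $u>0$,
\begin{equation}
\Delta\log u\ge -c_1 K+c_2 B\,u,
\end{equation}
with $c_1,c_2>0$ depending only on $m$. The constant term $-c_1K$ is produced by substituting the Ricci lower bound $\Ric_g\ge -K$ into the contribution of the curvature of $M$, and the favorable term $+c_2Bu$ comes from the bisectional curvature upper bound $-B$ on $f(B_o(2R))$ after estimating the resulting fourth-order contraction of the curvature tensor of $N$ against $\p f$ from below by a multiple of $u^2$. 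Because this is a pointwise identity it uses only the hypotheses on $B_o(2R)$ and on $f(B_o(2R))$.

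Next I would localize the maximum principle. Let $r=d(o,\cdot)$ and take a cutoff $\phi=\phi(r)$ with $\phi\equiv 1$ on $[0,R]$, $\supp\phi\subset[0,2R]$, $\phi'\le 0$, $|\phi'|^2/\phi\le C_0R^{-2}$ and $\phi''\ge -C_0R^{-2}$, and consider $F=\phi u$ on $B_o(2R)$. If $\max F=0$ then $u\equiv 0$ on $B_o(R)$; otherwise $F$ has a positive interior maximum at a point $p$, where $\nabla F=0$, $\Delta F\le 0$ and $u(p)>0$. Substituting $\nabla u=-(u/\phi)\nabla\phi$ and $\Delta u=u\Delta\log u+|\nabla u|^2/u$ together with the inequality above into $0\ge\Delta F$ and dividing by $u(p)$ yields
\begin{equation}
c_2 B\,F(p)\le -\Delta\phi+\frac{|\nabla\phi|^2}{\phi}+c_1 K .
\end{equation}
Here $|\nabla\phi|^2/\phi\le C_0R^{-2}$, and $\Delta\phi=\phi''+\phi'\,\Delta r$ is bounded below using $\phi''\ge -C_0R^{-2}$ together with the Laplacian comparison theorem $\Delta r\le C(m)K^{1/2}\coth(K^{1/2}r)$, which is available because $\Ric_g\ge -K$ on $B_o(2R)$; as $\phi'$ is supported in $\{r\ge R\}$ and $\coth$ is decreasing, the term $\phi'\,\Delta r$ contributes $R^{-1}K^{1/2}\coth(K^{1/2}R)$. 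Collecting the estimates gives $F(p)\le (C/B)\big(K+R^{-2}(1+K^{1/2}R\coth(K^{1/2}R))\big)$, and since $\phi\equiv 1$ on $B_o(R)$ the same bound holds for $u$, hence for $f^*\omega_h$, there.

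I expect the main obstacle to be the Chern--Lu inequality itself. One must carry out the Bochner computation for $\Delta u$, recognize the curvature-of-$M$ term as $\ge -c_1Ku$ via the Ricci lower bound, and, most delicately, bound the curvature-of-$N$ term below by $c_2Bu^2$ using only the scalar bound $-B$ on holomorphic bisectional curvature; this tracing and Cauchy--Schwarz step is where the dependence of $C$ on $m$ enters, and it requires absorbing the remainder $|\nabla\p f|^2$ and the term $|\nabla u|^2/u^2$ that appears on passing from $\Delta u$ to $\Delta\log u$. A secondary technical issue is the nonsmoothness of $r$ at the cut locus of $o$; I would resolve it by Calabi's trick, replacing $r$ near $p$ by a smooth upper barrier so that the inequalities at the maximum point remain valid.
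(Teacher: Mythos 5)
Your proposal is correct and follows essentially the same route as the paper: the Chern--Lu--Yau differential inequality for $u=\|\partial f\|^2$, localization by a cutoff $\phi(r)$, the maximum-point argument with Calabi's trick at the cut locus, and the Laplacian comparison theorem to control $\phi'\Delta r$. The only cosmetic difference is that you use the inequality in the form $\Delta\log u\ge -c_1K+c_2Bu$ and divide by $u(p)$ at the maximum, whereas the paper works with the equivalent quadratic form $\tfrac12\Delta u\ge -Ku+Bu^2$ and obtains a quadratic inequality for $(\phi u)(\bar x)$; both yield the stated bound with the same constant structure.
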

\begin{proof}
Let $u=\|\partial f\|^2$ which is half the energy density
of $f$. It is clear that $f^*\omega_h\leq \|\partial
f\|^2\omega_g$.  Then $u$ satisfies the following
 inequality on $B(2R)$: (See \cite{Chern}, \cite{Yau-Schwartz})
\begin{equation}\label{eqn-energy-density}
\frac{1}{2}\Delta u\geq -Ku+Bu^2.
\end{equation}

Let $\eta\ge0$ be a smooth function on $\R$ such that (1)
$\eta(t)=1$ for $t\leq 1$, (2) $\eta(t)=0$ for $t\geq 2$,
(3) $-C_1\leq \eta'/\eta^\frac12\leq 0$ for all $t\in\R$,
and (4) $|\eta''(t)|\leq C_1$ for all $t\in \R$ for some
absolute constant $C_1>0$. Let $\phi=\eta(r/R)$.

Suppose $\phi u$ attains maximum at $\bar x\in B_o(2R)$.   We
can assume that $\phi(  \bar x)>0$ otherwise $u(\bar x)=0$ for any
$x\in B_o(R)$ and we are done. Using an argument of Calabi as in \cite{ChengYau75}, we may assume that $\phi u$ is smooth at $\bar x$. Then, we have (1) $\nabla
(\phi u)(  \bar x)=0$ which implies that at $\bar x$, $\nabla u
=-u\phi^{-1}\nabla\phi  $, (2) $\Delta(\phi u)(\bar x)\leq
0$. Hence at $\bar x$, we have:
\begin{equation}\label{eqn-phi-u}
\begin{split}
0\ge&\Delta(\phi u)\\
=&\phi \Delta u+2\vv<\nabla \phi,\nabla u>+u\Delta\phi \\
=&\phi\Delta u+2\vv<\nabla \phi ,-u\phi^{-1}\nabla\phi>+ u \Delta\phi\\
= & \phi \Delta u-2uR^{-2}\lf|\frac{(\eta')^2}{\eta}\ri| +
u\lf(R^{-1}\eta'\Delta r+ R^{-2}\eta''\ri)
 \\
\ge &\phi(-2Ku+2Bu^2)-C_2R^{-2}\lf(1+
K^\frac12R\coth(K^\frac12R)\ri)u.
\end{split}
\end{equation}
where $C_2$ is   a positive constant depending only on $m$.
Here we have used \eqref{eqn-energy-density}, the
properties of $\eta$ and the Laplacian comparison
\cite{Wu}. Hence
$$
2B(\phi u)^2(\bar x)\le \lf[2K+C_2R^{-2}\lf(1+
K^\frac12R\coth(K^\frac12R)\ri)\ri](\phi u)(\bar x).
$$
Hence
$$
\sup_{B_o(R)} u\le \sup_{B_o(2R)} \phi u=(\phi u)(\bar x)\le
\frac{2K+C_2R^{-2}\lf(1+
K^\frac12R\coth(K^\frac12R)\ri)}{2B}.
$$
From this the lemma follows.

\end{proof}

Before we state the next lemma, let us introduce some notations. Let
$M=X\times Y$ and let $o=(p,q)\in X\times Y$ be a fixed point.   For
any $x\in X$, let $Y_x=\{x\}\times Y$ with induced metric denoted as
$g^x$ and for any $y\in Y$, let $X_y=X\times\{y\}$ with induced
metric denoted as $g^y$.

\begin{lem}\label{volume-l1} Let $M$, $X, Y$ as in Theorem \ref{th-1}. Suppose there is a complete K\"ahler metric on $M$ with Ricci curvature bounded from below by $-A(1+r)^2$ and with holomorphic bisectional curvature bounded from above by $-B<0$.  Let $o=(p,q)\in X\times Y$ be a fixed point. Let $V^{^{X_q}}_p(r)$ be the volume of the geodesic ball of radius $r$ with center at $p$ with respect to the induced metric $g^q$. Then
$$
V^{^{X_q}}_p(r)\le C_1\exp(C_2 r^2)
$$
for some constants $C_1$ and $C_2$ independent of $r$.
\end{lem}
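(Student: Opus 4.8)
The plan is to avoid estimating the intrinsic Ricci curvature of $(X_q,g^q)$ directly, and instead to work with the restriction to $X_q$ of the ambient distance function $\rho=d(o,\cdot)$. The key structural facts are that $X_q=X\times\{q\}$ is a \emph{complex} submanifold of the K\"ahler manifold $(M,g)$, hence \emph{minimal}, and that its normal space at each point is exactly $T^{1,0}_qY$, a $J$-invariant (complex) subspace. By minimality, the submanifold Laplacian of any function is the tangential trace of the ambient Hessian, so, away from the ambient cut locus,
\[
\Delta_{g^q}\rho=\tr_{TX_q}\big(\mathrm{Hess}_M\,\rho\big)=\Delta_M\rho-\tr_{NX_q}\big(\mathrm{Hess}_M\,\rho\big),
\]
where $NX_q$ denotes the normal bundle. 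The strategy is to bound the first term from above and the second from below, obtaining $\Delta_{g^q}\rho\le C(1+\rho)$, and then to integrate this estimate.

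The upper bound on $\Delta_M\rho$ is the Laplacian comparison theorem: from $\Ric_M\ge-A(1+\rho)^2$ one gets $\Delta_M\rho\le C(1+\rho)$ for large $\rho$, which is just the variable-curvature version of the $K^{\frac12}\coth(K^{\frac12}\cdot)$ estimate already invoked in Lemma \ref{lem-schwartz} (with $K\sim(1+\rho)^2$). The lower bound on the normal trace is where the assumption on the holomorphic bisectional curvature, together with the complex structure, is essential. Let $\gamma$ be the minimal geodesic from $o$, let $T=\gamma'$, and choose a $J$-invariant orthonormal frame $\{v,Jv,\dots\}$ of the normal space. For each such pair one has the K\"ahler identity that the sum of the two radial sectional curvatures $K(T,v)+K(T,Jv)$ equals the holomorphic bisectional curvature in the directions $T,v$, hence is bounded above by the negative constant coming from the hypothesis. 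Since this controlled combination is exactly what appears in the \emph{trace} Riccati equation for $\mathrm{Hess}_M\rho$ restricted to the $J$-invariant normal plane, the Riccati comparison forces the normal Hessian trace to be bounded below, $\tr_{NX_q}\mathrm{Hess}_M\rho\ge -C$. Combining the two estimates gives $\Delta_{g^q}\rho\le C(1+\rho)$, and therefore $\Delta_{g^q}(\rho^2)\le C(1+\rho^2)$ on $X_q$.

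Finally, since the intrinsic distance dominates the ambient distance, $\rho\le d_{g^q}(p,\cdot)$, the intrinsic ball $B^{X_q}_p(r)$ is contained in $\{\rho<r\}\cap X_q$, so it suffices to bound the volume of the latter; integrating the Laplacian estimate via a Bishop--Gromov/co-area comparison then yields level-set area elements growing like $\exp(Cr^2)$, whence $V^{X_q}_p(r)\le C_1\exp(C_2 r^2)$. I expect the main conceptual obstacle to be precisely the reason this indirect route is needed: the intrinsic Ricci curvature of $g^q$ is \emph{not} bounded below under these hypotheses, because the Gauss equation introduces an uncontrolled second-fundamental-form term $-|A|^2$, so a naive intrinsic Bishop--Gromov argument is unavailable and everything must be funneled through the minimality identity and the bisectional $J$-pairing above. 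A secondary technical point is that $|\nabla_{g^q}\rho|$ may degenerate on $X_q$, so the passage from the Laplacian bound to the volume bound in the last step must be carried out carefully (using the negativity of the curvature of $X_q$, or a cutoff/Gronwall integration) rather than by a direct co-area identity.
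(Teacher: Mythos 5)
Your approach breaks down at its central step, the lower bound $\tr_{NX_q}\mathrm{Hess}_M\,\rho\ge -C$. Along a radial geodesic with tangent $T$, the Hessian $S=\mathrm{Hess}_M\,\rho$ satisfies the matrix Riccati equation $S'+S^2+R_T=0$, and tracing over a parallel $J$-invariant $2$-plane $P=\mathrm{span}\{v,Jv\}$ gives $(\tr_P S)'=-\tr_P(S^2)-\tr_P R_T$. Your curvature hypothesis controls only $\tr_P R_T=K(T,v)+K(T,Jv)\le -B$, so the inequality you obtain is $(\tr_P S)'\ge B-\tr_P(S^2)$; but $\tr_P(S^2)=\|Sv\|^2+\|SJv\|^2$ contains the mixed components $\la Sv,e_j\ra$ with $e_j\notin P$, and these are not controlled by $(\tr_P S)^2$ or by any curvature quantity you have assumed. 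The traced Riccati equation does not close on the plane $P$: a Hessian lower bound for $\rho$ requires an upper bound on \emph{all} radial sectional curvatures $K(T,\cdot)$, whereas under the hypotheses of Theorem \ref{th-1} individual sectional curvatures can be arbitrarily positive (only the $J$-paired sums are $\le -B$); nonpositive sectional curvature is assumed only in Theorem \ref{th-2}. A smaller but real error in the same step: for a general K\"ahler metric on the product, the splitting $T_xX\oplus T_qY$ is not $g$-orthogonal, so the normal space of $X_q$ is the $g$-orthogonal complement of $TX_q$ --- a $J$-invariant subspace, but not $\{0\}\times T^{1,0}_qY$ as you assert.

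The final integration step is also not salvageable as sketched, for the reason you yourself flag: with $V(t)=\mathrm{Vol}(\{\rho<t\}\cap X_q)$ one has $V'(t)=\int_{\{\rho=t\}}|\nabla_{g^q}\rho|^{-1}$, while the divergence theorem converts your Laplacian bound into control of $\int_{\{\rho=t\}}|\nabla_{g^q}\rho|$; these differ by the factor $|\nabla_{g^q}\rho|^{-2}$, which is unbounded where the slice runs nearly tangent to ambient distance spheres, and negativity of the bisectional curvature of $X_q$ (which \emph{does} pass to complex submanifolds) gives no lower gradient bound. The paper's proof avoids intrinsic comparison geometry on $X_q$ entirely and instead exploits the product structure: it applies the local Schwarz lemma (Lemma \ref{lem-schwartz}) to the holomorphic projections $\pi''_{x}$ and $\pi'_{q}$ --- using only the Ricci lower bound on $M$ and the bisectional upper bound on the target slices --- to obtain the metric bounds \eqref{projection-e1}--\eqref{projection-e2} and the Jacobian bound \eqref{Jacobian-e1}; a Fubini/co-area argument on $M=X\times Y$ then bounds $V^q(R)$ by a polynomial multiple of the ambient volume $V_o(2R)$, and the ambient Bishop--Gromov comparison with $\Ric\ge -A(1+r)^2$ gives $V_o(2R)\le \exp(C(1+R)^2)$. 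If you want to repair your argument, the holomorphic projections (rather than the second fundamental form and Riccati comparison) are the mechanism by which the bisectional hypothesis can actually be brought to bear here.
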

\begin{proof} Let $x_0\in X$ be any point. Consider the projection $\pi''_{x_0}:X\times Y\to Y_{x_0}$ such that $\pi''_{x_0}(x,y)=(x_0,y)$. Note that the holomorphic bisectional curvature of   $Y_{x_0}$ is still bounded above by $-B$. By Lemma \ref{lem-schwartz}, there is a constant $C_1$ independent of $x_0$ such that
\begin{equation}\label{projection-e1}
(\pi''_{x_0})^*(g^{ x_0})|_{(x,y)}\le C_1\lf(1+r (x,y)\ri)^2g|_{(x,y)}
\end{equation}
 for all   $(x,y)$. Similarly, if we choose $C_1$ large enough, we also have:
 \begin{equation}\label{projection-e2}
(\pi'_{y_0})^*(g^{y_0})|_{(x,y)}\le C_1\lf(1+r (x,y)\ri)^2g|_{(x,y)}
\end{equation}
  for any $y_0\in Y$, and $(x,y)\in M$ where  $\pi'_{y_0}$ is the projection from $M$ onto $X_{y_0}$.

  Let $\gamma$ be  any smooth curve in  $Y_{p} $ from $(p,q)$. Then by \eqref{projection-e1},  for any $(x,q)\in X_q$ with $r(x,q)\le R$, the length   $L(\pi_{x}\circ \gamma)$ satisfies:
  $$
  L(\pi_{x}\circ \gamma)\le C_1^\frac12 \lf(1+r (x,q)\ri)L(\gamma).
  $$
  Hence there is $\rho>0$ such that  for $R>1$ if $B^p(\rho)$ is the geodesic ball in $Y_p$ with radius $\rho$ and center at $(p,q)$, then   $\pi_{x}(B^p(\rho))\subset B_o(2R)$ for all $(x,q)\in B_o(R)$.

  On the other hand, by \eqref{projection-e2}, the Jacobian $J(\pi'
  _q)$ of $\pi'_q$ at $(x,y)$ satisfies:
  \begin{equation}\label{Jacobian-e1}
     J(\pi'_q)(x,y)\le C_2 \lf(1+r(x,y)\ri)^{2m}
  \end{equation}
  for some constant $C_2$ for all $(x,y)$.

Now, let   $R>1$ be any constant. Let $dV_x$ be the volume element of $Y_x$ and $dV_y$ be the volume element of $X_y$.  By the  co-area formula (see \cite{BZ}, we have
\begin{equation}\label{eqn-v}
\begin{split}
V_o(2R)=&\int_{M}\chi_{B_o(2R)}dV_g\\
=&\int_{X_q}\int_{y\in Y_x}\chi_{B_o(2R)}|J(\pi'_q)|^{-1}(x,y)dV_x dV_q\\
\geq&C_2^{-1}(1+R)^{-2m}\int_{(x,q)\in B_o(R)}\int_{\pi_x''(B^p(\rho))}dV_xdV_q\\
\geq&C_3 (1+R)^{-2m}(1+R)^{-2n}V^p(\rho)V^q(R)
\end{split}
\end{equation}
for some constant $C_3>0$ for all $R$ by \eqref{projection-e1}. Here $V^p(\rho)$ is the volume of $B^p(\rho)$ in $Y_p$ and $V^q(R)$ is the volume of the geodesic ball in $X_q$ with radius $R$ and center at $(p,q)$.

By volume comparison, we have $V_o(2R)\le \exp(C(1+R)^2)$ for some constant $C$. From this and \eqref{eqn-v}, the result follows.
\end{proof}

We also need the following result of Takegoshi \cite[Theorem 1.1]{Takegoshi}:
\begin{thm}\label{Takegoshi} Let $M$ be a complete noncompact Riemannian manifold. Suppose there is a smooth function $f$ such that $S=\{f>\delta\}$ is nonempty for some $\delta>0$ and on $S$
$$
\Delta f\ge \frac{Cf^{1+a}}{(1+r)^b}
$$
for some positive constants $C, a$ and $0\le b<2$ where $r$ is the distance function from some fixed point. Then the volume $V(r)$ of the geodesic ball with radius $r$ satisfies:
$$
\liminf_{r\to\infty}\frac{\log V(r)}{r^{2-b}}=\infty.
$$

\end{thm}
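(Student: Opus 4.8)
The plan is to argue by contraposition: assuming the volume grows too slowly I will show that the strong subharmonicity forces the function to blow up at a finite radius, which is impossible since it is globally smooth. First I would reduce to a clean global subsolution by setting $u=(f-\delta)_+$. This $u$ is nonnegative, Lipschitz and not identically zero; on $S=\{f>\delta\}$ we have $f=u+\delta\geq u$, so $\Delta u=\Delta f\geq \frac{Cf^{1+a}}{(1+r)^b}\geq \frac{Cu^{1+a}}{(1+r)^b}$ there, while the truncation only adds a nonnegative measure along $\{f=\delta\}$ and $u\equiv 0$ (hence the inequality is trivial) on the rest. Thus $\Delta u\geq \frac{Cu^{1+a}}{(1+r)^b}$ holds distributionally on all of $M$, and in particular $u$ is subharmonic, so its flux $\Phi(R)=\int_{B_o(R)}\Delta u\,dV=\int_{\partial B_o(R)}\partial_r u\,dA$ (Gauss--Green for Lipschitz $u$, valid for a.e. $R$) is nonnegative, nondecreasing, and strictly positive once $B_o(R)$ meets $S$.

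The heart of the matter is to turn this pointwise inequality into a single differential inequality in $R$, using only curvature-free tools. The source bound gives $\Phi(R)\geq C\int_{B_o(R)}\frac{u^{1+a}}{(1+r)^b}\,dV$. A Caccioppoli estimate, obtained by testing the inequality against $\phi^2 u$ for a cutoff $\phi$, integrating by parts and absorbing, yields $\int \phi^2|\nabla u|^2 + C\int \frac{\phi^2 u^{2+a}}{(1+r)^b}\leq 2\int u^2|\nabla\phi|^2$, which controls the Dirichlet energy. Combining these through the coarea formula, Cauchy--Schwarz on the sphere (so that $\Phi(R)^2\leq \big(\int_{\partial B_o(R)}|\nabla u|^2\big)\,V'(R)$), and Hölder with exponent $\theta=\frac{2}{2+a}<1$, I would express everything in terms of the weighted quantity $W(R)=\int_{B_o(R)}\frac{u^{2+a}}{(1+r)^b}\,dV$ together with $V(R)$ and $V'(R)$. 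The resulting closed differential inequality is \emph{superlinear}, with exponent strictly above $1$ precisely because $a>0$.

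This superlinearity is what drives the conclusion. In the radial model the inequality reads $u''\gtrsim \frac{C}{(1+r)^b}u^{1+a}$ whenever the area $V'(R)$ fails to grow fast enough to carry the flux; multiplying by $u'$ and integrating gives $u'\gtrsim (1+r)^{-b/2}u^{1+a/2}$, and since $\int^{\infty}(1+r)^{-b/2}\,dr=\infty$ \emph{exactly because} $b<2$, while $\int^{\infty}s^{-1-a/2}\,ds<\infty$, the function $u$ must reach $+\infty$ at a finite radius. As $u$ is globally smooth this is a contradiction, so the volume is forced to grow; tracking the exponents shows the compensating rate is $e^{\kappa R^{2-b}}$ for \emph{every} $\kappa$, which is the assertion $\liminf_{R\to\infty}\frac{\log V(R)}{R^{2-b}}=\infty$. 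Run quantitatively against the contrapositive hypothesis $V(R)\leq e^{\kappa R^{2-b}}$, this produces the contradiction.

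The main obstacle is the last two steps together: making the superlinear finite-radius blow-up rigorous on a complete manifold carrying \emph{no} curvature assumption. With a Ricci lower bound one could invoke the Laplacian comparison for $r$ and compare $u$ against an explicit radial Keller--Osserman supersolution, but here $\Delta r$ is entirely uncontrolled, so radial comparison is unavailable and one must work through the coarea formula and cutoff integral estimates, which are insensitive to curvature. The delicate points are to preserve the $(1+a)$-power feedback through the passage to spherical means and to verify that the weight exponent $b<2$ produces exactly the borderline divergence $\int^{\infty}(1+r)^{-b/2}\,dr=\infty$ responsible for the sharp rate $R^{2-b}$; handling this nonlinear coupling between $W$ and $V$ in averaged rather than pointwise form is the crux of the argument.
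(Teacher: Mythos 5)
First, a point of bookkeeping: the paper does not prove this statement at all --- it is quoted verbatim as Theorem 1.1 of Takegoshi \cite{Takegoshi}, so the benchmark is Takegoshi's Osaka J.\ Math.\ paper rather than anything in the present text. Your overall strategy --- truncate to $u=(f-\delta)_+$, observe that $\Delta u\ge Cu^{1+a}(1+r)^{-b}$ holds weakly on all of $M$, and then run curvature-free test-function/Caccioppoli/coarea estimates against slow volume growth --- is the right family of techniques (it is the spirit of Takegoshi's argument and of the closely related work of Pigola--Rigoli--Setti), and the steps you actually execute (the truncation, the Caccioppoli inequality obtained by testing against $\phi^2u$, the H\"older splitting with exponent $\theta=2/(2+a)$) are correct.

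However, the proposal has a genuine gap exactly where you yourself locate ``the crux'': the closed superlinear differential inequality in $R$ is asserted, never derived, and there are two concrete obstructions to closing it as sketched. (i) Your Caccioppoli-plus-H\"older chain yields an inequality of the shape $W(R)\le C R^{-2}(1+R)^{2b/(2+a)}\,W(2R)^{2/(2+a)}\,V(2R)^{a/(2+a)}$, i.e.\ it bounds the weighted quantity at a \emph{small} radius by a sublinear power of it at a \emph{large} radius. Since $f$ (hence $u$) is not assumed bounded in the theorem, there is no a priori upper bound on $W(R)$ or on $\int_{B_R}u^2$ against which this could bite; extracting a contradiction requires a genuine iteration or difference argument (for instance an annular Caccioppoli estimate giving $H(s)-H(t)\ge c\bigl[(s-t)^2H(t)/\bigl((1+s)^{b\theta}V(s)^{1-\theta}\bigr)\bigr]^{1/\theta}$ for the increasing quantity $H$, followed by a Keller--Osserman-type summation), none of which appears in the sketch. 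The radial ODE blow-up picture you offer ($u'\gtrsim(1+r)^{-b/2}u^{1+a/2}$ together with the divergence of $\int^\infty(1+r)^{-b/2}\,dr$ for $b<2$) is a correct heuristic for where $2-b$ comes from, but, as you concede, radial comparison is unavailable without curvature control, so it cannot substitute for the missing integral argument. (ii) The contrapositive hypothesis is $\liminf_{r\to\infty}\log V(r)/r^{2-b}<\infty$, which controls the volume only along a \emph{sequence} $r_j\to\infty$, not for all $R$; when you propose to ``run quantitatively against $V(R)\le e^{\kappa R^{2-b}}$'' and integrate a differential inequality in $R$, you are silently upgrading the liminf to an all-$R$ bound. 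Arranging the difference estimates so that the volume is evaluated only at radii taken from the sequence is an essential (and nontrivial) part of any correct proof and is absent here.
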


\begin{proof}[Proof of Theorem \ref{th-1}]
We proceed by contradiction. Let $g$ be a complete K\"ahler metric
on $M$ satisfying the assumptions.

Suppose $o=(p,q)\in M$. Let $u$ be vector in
$T_{p,q}^{1,0}(M)=T_{p}^{1,0}(X)\times T_{q}^{1,0}(Y)$ such that
$u\in \{0\}\times T_{q}^{1,0}(Y)$ and such that $g(u,\bar u)=1$. Let
\begin{equation*}
f(x)=f(x,q)=\|(\pi''_{x})_*(u)\|^2.
\end{equation*}
Then $f$ is a function on $X_q$.
Then, by \eqref{projection-e1}
\begin{equation}\label{fbound-e1}
f(x)\leq C_1(1+r(p,q))^2 g_{u\bar u}(p,q)=C_1g_{u\bar u}(p,q).
\end{equation}
Hence, $f$ is a positive bounded function.

Let $(z^1,z^2,\cdots,z^m,z^{m+1},\cdots,z^{n+m})$ be a holomorphic
coordinate of $M$ at $(x,q)$ such that (1) $(z^1,z^2,\cdots,z^m)$ is
a normal coordinate of $X_q$ at $x$ and (2) $g_{a\bar
b}(x,q)=\delta_{ab}$, $m+1\le a, b\le m+n$. Then, by identifying $(\pi_x)_*(u)$ with $u$, we have
\begin{equation}\label{eqn-f}
\begin{split}
\Delta_{X_q}f(x)=&2\sum_{i=1}^m\partial_{i}\partial_{\bar i}g_{u\bar u}(x,q)\\
=&2\sum_{i=1}^m(-R_{u\bar ui\bar i}+g^{\bar ba}\partial_{i}g_{u\bar b}\partial_{\bar i}g_{a\bar u})(x,q)\\
=&-2\sum_{i=1}^{m}R_{u\bar ui\bar i}(x,q)+2\sum_{i=1}^m\sum_{b=1}^{n+m}|\partial_{i}g_{u\bar b}|^2(x,q)\\
\geq& 2mBg_{u\bar u}(x,q)\\
=&2mBf(x).
\end{split}
\end{equation}
Combining this with \eqref{fbound-e1}, we have
$$
\Delta_{X_q}f\ge \frac{2mB}{C_1}f^2.
$$
By Lemma \ref{volume-l1} and Theorem \ref{Takegoshi}, we have a contradiction because $f>0$.

\end{proof}

 \section{Proof of Theorem \ref{th-2}}

In order to prove the second main result, we need the following
lemma. We will use the notations as in the previous section.
\begin{lem}\label{lem-schwartz-2}
Let $M=X^m\times Y^n$ be the product of two simply connected complex
manifolds with positive dimension. Suppose that $g$ is a complete
K\"ahler metric on $M$ with Ricci curvature  $\geq
-A(1+r^2)^\gamma$, holomorphic bisectional curvature $\leq
-B(1+r^2)^{-\delta}$, and nonpositive sectional curvature, where
$\gamma\geq 0$, $\delta>0$ such that $\gamma+2\delta<1$, $A,B$ are
some positive constants, and $r=r(x,y)  $ is the distance of
$(x,y)\in X\times Y$ and a fixed point $o\in M$. Then, there is a
positive constant $C$ depending only on $m, n, \gamma,\delta, A$ and
$B$, such that for any $x_0\in X$,
\begin{equation}\label{Schwarz-2-e1}
(\pi''_{x_0})^*(g^{x_0})|_{(x,y)} \leq C(1+r^2(x,y))^\gamma(1+r^2(x_0,y))^\delta g|_{(x,y)}
\end{equation}
for any $(x,y)\in M$ and $y\in Y$.
\end{lem}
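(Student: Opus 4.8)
The plan is to reduce the statement to a gradient estimate for the holomorphic projection and then to run the local Schwarz lemma of Lemma~\ref{lem-schwartz} on balls of a carefully chosen radius. Write $f=\pi''_{x_0}\colon (M,g)\to (Y_{x_0},g^{x_0})$; since the complex structure on $M$ is the product one, $f$ is holomorphic, and for a holomorphic map $f^*g^{x_0}\le \|\partial f\|^2 g$, so it suffices to bound $u:=\|\partial f\|^2$ by $C(1+r^2)^\gamma(1+\rho^2)^\delta$, where $r=r(x,y)$ and $\rho=\rho(x,y):=r\circ f=r(x_0,y)$. Two geometric inputs are needed. First, $(Y_{x_0},g^{x_0})$ is a complete Kähler manifold (a properly embedded closed complex submanifold of the complete manifold $M$), and by the Gauss equation the holomorphic bisectional curvature of a complex submanifold is no larger than that of the ambient manifold; hence the holomorphic bisectional curvature of $(Y_{x_0},g^{x_0})$ at $(x_0,y)$ is $\le -B(1+\rho^2)^{-\delta}$. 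Second, the Chern--Lu--Yau computation behind \eqref{eqn-energy-density}, now with the variable bounds inserted at the appropriate points, gives $\frac12\Delta u\ge -A(1+r^2)^\gamma u+B(1+\rho^2)^{-\delta}u^2$; the balance $u\sim A(1+r^2)^\gamma/[B(1+\rho^2)^{-\delta}]$ of the two right-hand terms is exactly the asserted bound, which is the reason to expect \eqref{Schwarz-2-e1}.

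To make this balance rigorous while keeping the argument local, I fix $(x_1,y_1)$, set $r_1=r(x_1,y_1)$, $\rho_1=r(x_0,y_1)$, and apply Lemma~\ref{lem-schwartz} to $f$ on the ball $B_{(x_1,y_1)}(\sigma)$ with $\sigma=(1+r_1^2)^{-\gamma/2}$. On the concentric ball $B_{(x_1,y_1)}(2\sigma)$ the slowly varying curvature bounds may be frozen: the Ricci curvature is $\ge -K$ with $K\le A(1+(r_1+2\sigma)^2)^\gamma\le C(1+r_1^2)^\gamma$, and the target bisectional curvature on $f(B_{(x_1,y_1)}(2\sigma))$ is $\le -B'$ with $B'=B(1+\rho_{\max}^2)^{-\delta}$, where $\rho_{\max}=\sup\{\rho:(x,y)\in B_{(x_1,y_1)}(2\sigma)\}$. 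The choice of $\sigma$ is made precisely so that $K^{\frac12}\sigma\coth(K^{\frac12}\sigma)$ is bounded by a constant and the cutoff contributions $\sigma^{-2}$ and $\sigma^{-1}K^{\frac12}$ are each $\le C(1+r_1^2)^\gamma$; Lemma~\ref{lem-schwartz} then yields $\sup_{B_{(x_1,y_1)}(\sigma)}u\le C(1+r_1^2)^\gamma(1+\rho_{\max}^2)^\delta/B$, with $C$ depending only on $m,n,\gamma,\delta,A,B$.

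It remains to replace $\rho_{\max}$ by $\rho_1$. Because $\rho$ is $1$-Lipschitz and $f$ distorts lengths by at most $\sup\|\partial f\|$, the image $f(B_{(x_1,y_1)}(2\sigma))$ has diameter $\le C\sigma\,(\sup u)^{\frac12}$, so $\rho_{\max}\le \rho_1+C\sigma(\sup u)^{\frac12}$. Substituting the expected bound $\sup u\le C(1+r_1^2)^\gamma(1+\rho_1^2)^\delta$ and $\sigma=(1+r_1^2)^{-\gamma/2}$ gives $\rho_{\max}-\rho_1\le C(1+\rho_1^2)^{\delta/2}$, which is of lower order since $\delta<\tfrac12$; hence $(1+\rho_{\max}^2)^\delta\le C(1+\rho_1^2)^\delta$ and the desired estimate holds at $(x_1,y_1)$.

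The main obstacle is exactly this last step: controlling $\rho_{\max}$ requires a bound on how far $f$ moves points, i.e. on $\sup\|\partial f\|$, which is the quantity being estimated. I would close the circle by a continuity/bootstrap argument: the set on which $u\le 2C^*(1+r^2)^\gamma(1+\rho^2)^\delta$ (with $C^*$ the constant above) is nonempty (near $o$ the curvature bounds are essentially constant, so Lemma~\ref{lem-schwartz} applies directly), closed by continuity, and self-improving on slightly larger balls by the estimate just derived, hence all of $M$. The hypotheses enter as follows: $\gamma<1$ (a consequence of $\gamma+2\delta<1$) legitimizes the radius $\sigma=(1+r_1^2)^{-\gamma/2}$ and renders the cutoff terms subdominant, while $\delta<\tfrac12$ makes the $\rho_{\max}$-correction lower order; the nonpositive sectional curvature, combined with the Ricci lower bound, yields the two-sided bound $-A(1+r^2)^\gamma\le K_{\mathrm{sec}}\le 0$ and the Cartan--Hadamard structure (so $r$ is smooth off $o$, with no cut locus), keeping the ball and distance estimates clean and uniform in $x_0$. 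A global weighted maximum principle applied to $\phi u/[(1+r^2)^\gamma(1+\rho^2)^\delta]$ is an alternative, but there the analogous difficulty is an upper bound for $\Delta(\rho^2)=\Delta(r^2\circ f)$, which involves the second fundamental form of the slice $Y_{x_0}$ in $M$ and is not controlled by curvature alone; this is why I prefer the localized version above.
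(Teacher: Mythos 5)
Your reduction to bounding $u=\|\partial\pi''_{x_0}\|^2$, the Gauss-equation remark, and the Chern--Lu--Yau inequality all match the paper, but the localized scheme has a genuine gap exactly at the point you flag, and your bootstrap does not close it. The local Schwarz lemma transfers information from the \emph{larger} ball $B_{z_1}(2\sigma)$ to the smaller one $B_{z_1}(\sigma)$, so there is no ``self-improvement on slightly larger balls'': to improve the bound at a point you must already control $u$ (hence $\rho_{\max}$) on a strictly larger ball, and the sublevel set $\{u\le 2C^*(1+r^2)^\gamma(1+\rho^2)^\delta\}$, while closed, acquires no openness from your estimate, since membership of a point carries no information about $u$ on a ball around it. Quantitatively, if $\Phi(R)=\sup_{B_o(R)}u/[(1+r^2)^\gamma(1+\rho^2)^\delta]$, your argument yields a recursion of the shape $\Phi(R)\le C\bigl(1+\Phi(R+2)^{\theta}\bigr)$ with $\theta=\delta/(1-\delta)<1$ --- the wrong direction; iterating it gives a uniform bound only if one already has an a priori growth bound on $\Phi$ at spatial infinity, and no such bound is available, because the only tool for estimating $\rho=r(x_0,\cdot)\circ\pi''_{x_0}$ on large balls is again $\sup\|\partial \pi''_{x_0}\|$. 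The anchoring step is also not uniform: near $o$ one gets $\rho_{\max}<\infty$ only by compactness of closed balls, with a bound depending on $x_0$ (if $x_0$ is far from $p$, then $\rho$ is large even on $B_o(2)$), whereas the lemma requires a constant independent of $x_0$.

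Moreover, the alternative you set aside at the end is precisely the paper's proof, and your reason for rejecting it is mistaken: since $\pi''_{x_0}$ is holomorphic and $g$ is K\"ahler, $\partial\bar\partial$ commutes with the pullback, so $\Delta\bigl(r^2\circ\pi''_{x_0}\bigr)=2g^{\bar ba}\partial_a\partial_{\bar b}\bigl(r^2\circ\pi''_{x_0}\bigr)$ involves \emph{no} second fundamental form or $\nabla d f$ term. The paper sets $v(x,y)=r^2(x_0,y)$, which is smooth and convex by Cartan--Hadamard --- this is where simple connectivity and nonpositive sectional curvature are really used, not just to tame your ball geometry --- and proves the two inequalities \eqref{eqn-gradient} and \eqref{eqn-laplacian}, namely $|\nabla v|^2\le 4uv$ and $\Delta v\le C_1u(1+v)^{\frac{1+\gamma}{2}}$: positivity of the complex Hessian $v_{\alpha\bar\beta}$ (from convexity of $r^2$) lets one compare the trace at $(x,y)$ with $\Delta_{Y_{x_0}}v$ at $(x_0,y)$ at the cost of a factor $u$, and the latter is at most $\Delta_M r^2$ by convexity together with minimality of the complex slice, then bounded by Laplacian comparison. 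The decisive structural point, which your sketch has no analogue of, is that both error terms carry a factor of $u$; hence for $w=u(C_2+v)^{-\delta}$ they are absorbed into the good $+w^2$ term once $C_2$ is large --- this is exactly where $\gamma+2\delta<1$ enters --- leaving \eqref{eqn-tilde-Q-final}, $\Delta w\ge C_3w^2-2A(1+r^2)^\gamma w-4\delta|\nabla w|w^{\frac12}$, to which the cutoff/Calabi argument of Lemma \ref{lem-schwartz} applies on $B_o(2R)$ with $K\sim A(1+4R^2)^\gamma$ and $\gamma<1$. Differentiating the weight $\rho^2$ instead of freezing it is what defuses the circularity you ran into; as written, your proposal is incomplete, and the repair is the weighted global maximum principle you dismissed.
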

\begin{proof}
For any point $x_0\in X$, let $u=\|\p \pi_{x_0}''\|^2.$ Then as
before, by \cite{Chern}, \cite{Yau-Schwartz}, we have:

\begin{equation}\label{eqn-Q}
\Delta u(x,y)\geq -2A(1+r^2(x,y))^{\gamma}u(x,y)+2B(1+r^2(x_0,y))^{-\delta}u(x,y)^{2}.
\end{equation}
Let $v(x,y)=r^2(x_0,y)$. Since $M$ is simply connected and has nonpositive curvature, $r^2(x,y)$ and $v$ are both smooth functions. In the following $\alpha, \beta$ range from $m+1$ to $m+n$. For $(x,y)\in M$, let $z^1,z^2,\cdots,z^{m}$ be holomorphic coordinates of $x$ in $X$ and
 $z^{m+1},\cdots,z^{m+n} $ be   holomorphic
coordinates of $y$ in $Y$ such that   (1) $g_{\alpha\bar \beta}(x,y)=\delta_{\alpha  \beta}$, $m+1\le \alpha  \beta\le m+n$ and (2) $g_{\alpha\bar \beta}(x_0,y)=\lambda_\alpha\delta_{\alpha\beta}$. Here $(z^{m+1},\cdots,z^{m+n})$ are also   considered as holomorphic coordinates of $Y_{x_0}$ because $\pi''_{x_0}$ is a biholomorphism between $Y_{x}$ and $Y_{x_0}$. Then
$u(x,y)=g^{\bar \beta\alpha}(x,y)g_{\alpha\bar \beta}(x_0,y).$
 Then
\begin{equation}\label{eqn-gradient}
\begin{split}
&\|\nabla v(x,y)\|^2(x,y)\\
=&2g^{\bar ba}(x,y)v_a(x,y) v_{\bar b} (x,y)\\
=&2g^{\bar \beta\alpha}(x,y)v_\alpha(x,y)v_{\bar \beta}(x,y)\\
=&2u(x,y)g^{\bar \beta\alpha}(x_0,y)v_\alpha(x,y)v_{\bar \beta}(x,y)\\
\leq& 4u(x,y)v(x,y)
\end{split}
\end{equation}
where we have used the fact that the   $|\nabla r (x,y)|=1$ and $r(x_0,y)=r(x,y)|_{Y_{x_0}}$.
On the other hand, since $r^2$ is convex, we have
\begin{equation}\label{eqn-laplacian}
\begin{split}
&\Delta v(x,y)\\
=&2g^{\bar ba}(x,y)v_{a\bar b}(x,y)\\
=&2g^{\bar\beta\alpha}(x,y)v_{\alpha\bar\beta}(x_0,y)\\
=&2 \sum_{\alpha}v_{\alpha\bar\alpha}(x_0,y)\\
\leq& 2u(x,y) \sum_{\alpha}\lambda_\alpha^{-1}v_{\alpha\bar\alpha}(x_0,y)\ \text{\ \ (since $v_{\alpha\bar\alpha}>0$)}\\
=&u(x,y)\Delta_{Y_{x_0}} v(x_0,y)\\
\leq& u(x,y)(\Delta r^2)(x_0,y)\text{\ \ (since $r^2$ is convex)}\\
\leq&C_1u(x,y)(1+v(x,y))^\frac{\gamma+1}{2}
\end{split}
\end{equation}
for some constant $C_1$ by the Laplacian comparison \cite{Wu} and the assumption on the Ricci curvature of $M$. Here and below $C_i$ will denote constants depending only on $m, n, \gamma, \delta, A, B$.
Let $$w(x,y)=u(x,y)(C_2+v(x,y))^{-\delta}$$
where $C_2>1$ is
a constant to be determined later.   Then,
\begin{equation}\label{eqn-tilde-Q}
\begin{split}
 \Delta   w
=&(C_2+ v)^{-\delta}\Delta u -2\delta (C_2+v)^{-1-\delta}\vv<\nabla u,\nabla v>\\
&-u\delta(C_2+v)^{-1-\delta}\Delta v
+u\delta(\delta+1)(C_2+v)^{-2-\delta}|\nabla v|^2\\
\ge& \lf(2B-\delta C_1\lf(C_2+v\ri)^{\frac\gamma2+\delta-\frac12}\ri)w^2-2A(1+r^2)^\gamma w-2\delta (C_2+v)^{-1}\vv<\nabla w,\nabla v>\\
\ge& \lf(2B-\delta C_1 C_2 ^{\frac\gamma2+\delta-\frac12}\ri)w^2-2A(1+r^2)^\gamma w-4\delta |\nabla w| w^\frac12
\end{split}
\end{equation}
where we have used \eqref{eqn-gradient}, \eqref{eqn-laplacian}, \eqref{eqn-Q} and  the that $C_2>1$ and $\gamma+2\delta<1$. Hence we may choose $C_2>0$ large enough, so that
 \begin{equation}\label{eqn-tilde-Q-final}
\Delta w\ge C_3 w^2-2A(1+r^2)^\gamma w-4\delta |\nabla w| w^\frac12
\end{equation}
for some $C_3>0$.  Then one can proceed as in the proof of Lemma \ref{lem-schwartz} to conclude that \eqref{Schwarz-2-e1} is true.

\end{proof}

\begin{proof}[Proof of Theorem \ref{th-2}]
  First observe that we may assume $M$ is simply connected because the distance function in the universal cover of $M$ is greater than or equal to the distance function in $M$. Suppose there is complete K\"ahler metric $g$ on $M$
on $M$ satisfying the curvature  assumptions.

Let $o=(p,q)$. As in the proof of Theorem \ref{th-1}, let $u$ be a vector in $T_{p,q}^{1,0}(M)=T_{p}^{1,0}(X)\times T_{q}^{1,0}(Y)$ such that $u\in \{0\}\times T_{q}^{1,0}(Y)$ and such that $g(u,\bar u)=1$. Let
\begin{equation*}
f(x)=f(x,q)=\|(\pi''_{x})_*(u)\|^2.
\end{equation*}
Then $f(x)$ is a function on $X_q$. By the same computation as in \eqref{eqn-f},
\begin{equation}\label{eqn-f-2}
\Delta_{X_q} f(x)\geq 2mB(1+r^2(x,q))^{-\delta}f(x).
\end{equation}

By Lemma \ref{lem-schwartz-2}, we have
\begin{equation}\label{eqn-growth-f}
\begin{split}
0<f(x)=&\|(\pi''_{x})_*(u)\|^2\\
=& (\pi_x'')^*(g^x)(u,\bar u)\\
\le &C_1(1+r^2(p,q))^\gamma(1+r^2(x,q))^\delta g(u,\bar u)\\
= &C_1(1+r^2(x,q))^\delta,
\end{split}
\end{equation}
where $C_1$ is a constant independent of $x$. We may proceed as in the proof of Theorem \ref{th-1} to estimate the volume growth of $X_q$ and use Theorem \ref{Takegoshi} to finish the proof. However, since the curvature of $M$ is nonpositive, we may proceed in a more simple way.

Let $h(x)=\log f(x)-2\delta\log(C_2+r^2(x,q))$ where $C_2>1$ is some
constant to be determined. By \eqref{eqn-growth-f}, $h$ achieves its
maximum at some point $(\bar x,q)\in X_q$. Then at $(\bar x,q)$
\begin{equation}\label{eqn-gradient-h=0}
\nabla_{X_q} \log f(\bar x)=2\delta\nabla_{X_q}\log(C_2+r^2(\bar x,q))\ \ \text{and}\ \    \Delta_{X_q}h\le0.
\end{equation}
Since $r^2$ is convex, we have $|\nabla_{X_q}r(x,q)|\le 1$ and $\Delta_{X_q}r^2(x,q)\le \Delta r^2(x,q)\le C_3(1+r^2(x,q))^{\frac{1+\gamma}2} $ for some constant $C_3$ independent of $x$. Let $r=r(\bar x, q)$, then at $(\bar x,q) $, using \eqref{eqn-f-2} and the fact that $\gamma+2\delta<1$, we have
\begin{equation}\label{eqn-laplacian-h}
\begin{split}
0\geq&\Delta_{X_q}h(\bar x)\\
=&f^{-1}\Delta_{X_q} f -(2\delta)^2|\nabla_{X_q}\log(C_2+r^2(\bar x,q))|^2-2\delta(C_2+r^2)^{-1}\Delta_{X_q}r^2(\bar x,q)\\
&+2\delta|\nabla_{X_q} \log(C_2+r^2(\bar x,q))|^2\\
\ge& 2mB(1+r^2)^{-\delta}-2\delta C_3(C_2+r^2)^{\frac{-1+\gamma}2}
\\
>& 2(C_2+r^2)^{-\delta}\lf(mB-\delta C_3(C_2+r^2)^{\frac{-1+\gamma+2\delta}2}\ri)\\
>& 2(C_2+r^2)^{-\delta}\lf(mB-\delta C_3 C_2^{\frac{-1+\gamma+2\delta}2}\ri)\\
>&0,
\end{split}
\end{equation}
if we chose $C_2>1$ large enough, such that $ \delta
 C_3C_2^{-\frac{1-\gamma-2\delta}{2}}< mB$. This can be done because $\gamma+2\delta<1$.  Hence we have a contradiction. This completes the proof of the theorem.
\end{proof}
\begin{rem}
Letting $\gamma=0$ in Theorem \ref{th-2}, we know that there is no
complete K\"ahler metric on $X\times Y$ with Ricci curvature bounded
from below and sectional curvature $\leq -A(1+r^2)^{-\delta}$ for
any $\delta<\frac{1}{2}$. We may ask the problem if $\frac{1}{2}$ is
the optimal power.

In \cite{Wu}, Greene-Wu proved that if a Hermitian manifold $M$ has
holomorphic sectional curvature $\leq -A(1+r^2)^{-1}$, then $M$ is
hyperbolic in the sense of Kobayashi-Royden. Note that $\C^n$ is not
hyperbolic in the sense of Kobayashi-Royden. So, there is no
Hermitian metric on $\C^n$ with holomorphic sectional curvature
$\leq -A(1+r^2)^{-1}$. On the other hand, the example given by
Seshadri \cite{Seshadri2006} has holomorphic bisectional curvature
$\leq -A[(1+r^2)\log(2+r)]^{-1}$. Therefore the optimal power must
be in [1/2,1].
\end{rem}


\begin{thebibliography}{99}

\bibitem{BZ}  Burago, Yu.D. and   Zalgaller, V. A., {\sl  Geometric inequalities}, Springer-Verlag, 1988.

\bibitem{ChenChengLu79}    Chen, Z. H., Cheng, S. Y., Lu, Q.-K.,{\sl  On the Schwarz lemma for complete K\"ahler manifolds}, Sci. Sinica \textbf{ 22} (1979), no. 11, 1238--1247.
\bibitem{ChengYau75} Cheng, S. Y. and Yau, S.-T., {\sl  Differential equations on Riemannian manifolds and their geometric applications}, Comm. Pure Appl. Math. \textbf{28} (1975), no. 3, 333--354.
\bibitem{Chern}  Chern, S. S., {\sl  On holomorphic mappings of Hermitian manifolds
of the same dimension}, in: Proc. Symp. Pure Math.
\textbf{11}, Amer. Math. Soc., Providence, R.I., 1968, pp.
157-170.
\bibitem{Wu}  Greene, R. E. and Wu, H., {\sl Function theory on manifolds which possess a pole}, Lecture notes in mathematics (Springer-Verlag) ; \textbf{v.699}, Springer-Verlag, 1979.
\bibitem{Wu-Kobayashi} Kobayashi, Shoshichi; Wu, H.,{\sl On holomorphic sections of certain hermitian vector bundles}. Math. Ann. 189 1970 1--4.
\bibitem{Seshadri2006}   Seshadri, H., {\sl Negative sectional curvature and the product complex structure}, Math. Res.
Lett.  \textbf{13} (2006),  495-500.

\bibitem{SeshadriZheng} Seshadri, H. ; Zheng, F., {\sl Complex product manifolds cannot be negatively curved}, Asian J. Math. \textbf{12} (2008), no. 1, 145--149.
\bibitem{Takegoshi}Takegoshi, K., {\sl A priori upper bounds of solutions satisfying a certain differential inequality on complete manifolds.} Osaka J. Math. 43 (2006), no. 4, 791--806.

 \bibitem{Tosatti}    Tosatti, V.,{\sl
A general Schwarz lemma for almost-Hermitian manifolds}
Comm. Anal. Geom. \textbf{15} (2007), no. 5, 1063--1086.
 \bibitem{Yang}     Yang, P.,{\sl  On K\"ahler manifolds with negative holomorphic bisectional curvature}, Duke Math
J.  \textbf{43} (1976),   871-874.
\bibitem{Yang-2}Yang, P.,{\sl K\"ahler metrics on fibered manifolds.} Proc. Amer.
Math. Soc. 63 (1977), no. 1, 131--133.

\bibitem{Yau75} Yau, S.-T., {\sl  Harmonic functions on complete Riemannian manifolds}, Comm. Pure Appl. Math. \textbf{28} (1975),   201–228.
\bibitem{Yau-Schwartz} Yau, S.-T., {\sl A general Schwarz lemma for K\"ahler manifolds.} Amer. J. Math. 100 (1978), no. 1, 197--203.




\bibitem{Zheng93}     Zheng, F.,{\sl Non-positively curved K\"ahler metrics on product manifolds}, Ann. of Math. \textbf{ 137} (1993)  671--673.
\bibitem{Zheng94}     Zheng, F.,{\sl  Curvature characterization of certain bounded domain of holomorphy}, Pacific J.
Math., \textbf{163} (1994),   183-188.
\end{thebibliography}
\end{document}